\newtheorem{theorem}{Theorem}[section]
\newtheorem{lemma}[theorem]{Lemma}
\newtheorem{proposition}[theorem]{Proposition}
\newtheorem{corollary}[theorem]{Corollary}
\theoremstyle{definition}
\theoremstyle{remark}
\newtheorem{remark}[theorem]{Remark}
\theoremstyle{remark}
\numberwithin{equation}{section}
\newcommand{\R}{\mathbb{R}}
\newcommand{\diw}{\mathrm{div}\,}
\newcommand{\Ln}{\mathcal{L}^n}
\newcommand{\LL}{\mathbb{L}}
\newcommand\Sing{\textup{Sing}}
\newcommand\Reg{\textup{Reg}}
\title[Weiss' and Monneau's quasi-monotonicity formulas]{Quasi-monotonicity formulas for classical obstacle problems with Sobolev coefficients and applications}
\author[M.~Focardi]{Matteo Focardi}
\address{DiMaI, Universit\`a degli Studi di Firenze}
\curraddr{Viale Morgagni 67/A, 50134 Firenze (Italy)}
\email{matteo.focardi@unifi.it}
\author[F.~Geraci]{Francesco Geraci}
\address{DiMaI, Universit\`a degli Studi di Firenze}
\curraddr{Viale Morgagni 67/A, 50134 Firenze (Italy)}
\email{geraci@math.unifi.it}
\author[E.~Spadaro]{Emanuele Spadaro}
\address{Universit\`a di Roma ``La Sapienza''}
\curraddr{P.le Aldo Moro 5,  Roma (Italy)}
\email{spadaro@mat.uniroma1.it}
\thanks{
E.~S.~has been supported by the ERC-STG Grant n. 759229
HiCoS ``Higher Co-dimension Singularities: Minimal Surfaces and 
the Thin Obstacle Problem''.
M.~F., F.~G. and E.~S. are members of the Gruppo Nazionale per
l'Analisi Matematica, la Probabilit\`a e le loro Applicazioni (GNAMPA)
of the Istituto Nazionale di Alta Matematica (INdAM). }
\subjclass[2010]{Primary: 35R35, 49N60}
\keywords{Classical obstacle problem, free boundary, monotonicity formulas}
\date{}
\begin{document}
\begin{abstract}
We establish Weiss' and Monneau's type quasi-monotonicity formulas for quadratic energies having matrix of coefficients
in a Sobolev space $W^{1,p}$, $p>n$, and provide an application to the corresponding free boundary analysis for the 
related classical obstacle problems.
\end{abstract}

\maketitle

%%%%%%%%%%%%%%%%%%%%%%%%%%%%%%%%%%%
%
%	INTRODUCTION
%
%%%%%%%%%%%%%%%%%%%%%%%%%%%%%%%%%%%
\section{Introduction}\label{s:intro}

The aim of this short note is to extend the range of validity of Weiss' and Monneau's type quasi-monotonicity 
formulas to classical obstacle problems involving quadratic forms having matrix of coefficients in a Sobolev 
space $W^{1,p}$, with $p>n$. Such results are instrumental to pursue the variational approach for the analysis
of the corresponding free boundaries in classical obstacle problems.
More precisely, we consider the functional $\mathscr{E}:W^{1,2}(\Omega)\to\R$ given by
 \begin{equation}\label{e:enrg}
 \mathcal{E}(v):=\int_\Omega \big(\langle \mathbb{A}(x)\nabla v(x),\nabla v(x)\rangle +2h(x)v(x)\big)\,dx,
\end{equation}
and study regularity issues related to its unique minimizer $w$ on the set %${\mathbb{K}_{\psi,g}}$ (see also \eqref{e:Kpsi})
\[
\mathcal{K}_{\psi,g} := \big\{v\in W^{1,2}(\Omega):\,v\geq \psi\;\;\Ln\,\text{ a.e. on } \Omega,\,
\textup{Tr}(v)=g \,\text{ on } \partial\Omega\big\}\,.
\]
Here $\Omega\subset \R^n$ is a bounded open set, $n\geq 2$, $\psi\in C^{1,1}_{loc}(\Omega)$ and $g\in H^{\sfrac12}(\partial\Omega)$,
are such that $\psi\leq g$ $\mathcal{H}^{n-1}$-a.e on $\partial\Omega$,
$\mathbb{A}:\Omega\to \R^{n\times n}$ is a matrix-valued field and $f:\Omega\to \R$ is a function satisfying:
\begin{itemize}
\item[(H1)] $\mathbb{A}\in W^{1,p}(\Omega;\R^{n\times n})$ with $p>n$;  
\item[(H2)] $\mathbb{A}(x)=\left(a_{ij}(x)\right)_{i,j=1,\dots,n}$ symmetric, continuous and coercive, 
  that is $a_{ij}=a_{ji}$ in $\Omega$ for all $i,\,j\in\{1,\ldots,n\}$,
  and for some $\Lambda\geq 1$ 
  \begin{equation}\label{A coerc cont}
   \Lambda^{-1}|\xi|^2\leq \langle \mathbb{A}(x)\xi, \xi\rangle \leq \Lambda|\xi|^2
%   \qquad\qquad \Ln \,\,\textrm{a.e.}\,\,\Omega, \,\,\forall \xi\in \R^n;
  \end{equation}
 for all $x\in\Omega$, $\xi\in \R^n$; 
 \item[(H3)] $f:=h-\diw(\mathbb{A}\nabla\psi)>c_0$ $\Ln$ a.e. $\Omega$, for some $c_0>0$, and
 $f$ is Dini-continuous, namely
  \begin{equation}\label{e:Dini continuity}
  \int_0^1 \frac{\omega_f(t)}{t}\,dt < \infty,
 \end{equation}
 where $\omega_f(t):=\sup_{x,y\in\Omega,\,|x-y|\leq t} |f(x)-f(y)|$.
 %modulus of continuity $f$ satisfying the following integrability condition and there exists;
\end{itemize}

In some instances in place of (H3) we will require the stronger condition
\begin{itemize}
 \item[(H4)] $f>c_0$ $\Ln$ a.e. $\Omega$, for some $c_0>0$, and
 $f$ is double-Dini continuous, that is   
  \begin{equation}\label{H4}
    \int_0^1 \frac{\omega_f(r)}{r}\,|\log r|^a\, dr < \infty,
  \end{equation}
for some $a\geq 1$. 
%  and exists $c_0>0$ such that $f\geq c_0$;
\end{itemize}
Note that for the zero obstacle problem, i.e.~$\psi=0$, assumptions (H3) and (H4) involve only the lower 
order term $h$ in the integrand and not the matrix field $\mathbb{A}$. 

Given the assumptions introduced above we provide a full free boundary stratification result. 
\begin{theorem}\label{t:linear}
Assume (H1)-(H4) to hold, and let $w$ be the (unique) minimizer of $\mathcal{E}$ in \eqref{e:enrg}
on $\mathcal{K}_{\psi,g}$.

Then, $w$ is $W^{2,p}_{loc}\cap C^{1,{1-\sfrac{n}{p}}}_{loc}(\Omega)$, and the free boundary 
decomposes as $\partial \{w = \psi\} \cap \Omega = \Reg(w) \cup \Sing(w)$, where 
$\Reg(w)$ and $\Sing(w)$ are called its regular and singular part, respectively. 
Moreover, $\Reg(w) \cap \Sing(w) = \emptyset$ and
\begin{itemize}
\item[(i)] if $a>2$ in (H4), then $\Reg(w)$ is relatively open in $\partial \{w = \psi\}$
and, for every point $x_0 \in \Reg(w)$, there exist $r=r(x_0)>0$ such that $\partial \{w = \psi\}\cap B_r(x)$ 
is a $C^1$ $(n-1)$-dimensional manifold with normal vector absolutely continuous. 
 
 In particular if $f$ is H\"{o}lder continuous there exists $r>0$ such that $\partial \{w = \psi\}\cap B_r(x)$ 
 is a $C^{1,\beta}$ $(n-1)$-dimensional manifold for some exponent $\beta\in (0,1)$.

\item[(ii)] if $a\geq1$ in (H4), then $\Sing(w) = \cup_{k=0}^{n-1} S_k$, with $S_k$ contained in the union of
at most countably many submanifolds of dimension $k$ and class $C^1$.
\end{itemize}
\end{theorem}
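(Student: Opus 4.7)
The overall plan is to follow the Weiss--Monneau blow-up strategy, adapted to variable Sobolev coefficients. Writing $u := w-\psi \geq 0$ and $f := h - \diw(\mathbb{A}\nabla\psi)$, the obstacle condition becomes $\diw(\mathbb{A}\nabla u) = f\,\chi_{\{u>0\}}$ in $\Omega$. Since $\mathbb{A}\in W^{1,p}$ with $p>n$ and $f\in L^\infty$, Calder\'on--Zygmund theory for divergence-form equations combined with the Caffarelli penalization for the obstacle problem yields $u \in W^{2,p}_{\mathrm{loc}}$; Morrey's embedding then gives $u\in C^{1,1-n/p}_{\mathrm{loc}}$. This establishes the first assertion, and identifies the free boundary with $\partial\{u=0\}\cap\Omega$.

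At a fixed free boundary point $x_0$, after an affine change of variables one may assume $\mathbb{A}(x_0)=\mathrm{Id}$. The heart of the argument is the establishment of Weiss' and Monneau's quasi-monotonicity formulas for
\begin{equation*}
W(r,x_0) := \frac{1}{r^{n+2}}\int_{B_r(x_0)}\big(\langle \mathbb{A}\nabla u,\nabla u\rangle + 2fu\big)\,dx - \frac{2}{r^{n+3}}\int_{\partial B_r(x_0)}\langle \mathbb{A}\nu,\nu\rangle\,u^2\,d\mathcal{H}^{n-1},
\end{equation*}
and, given a $2$-homogeneous polynomial $p$ solving the frozen model at $x_0$,
\begin{equation*}
M(r,x_0,p) := \frac{1}{r^{n+3}}\int_{\partial B_r(x_0)}(u-p)^2\,d\mathcal{H}^{n-1}.
\end{equation*}
The claim is that both $W$ and $M$ (plus suitable error terms) are non-decreasing in $r$, with each error having finite integral against $dr/r$. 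Differentiating $W$ produces the usual non-negative Rellich-type boundary contribution plus perturbations of the form $\int_{B_r}|\nabla \mathbb{A}||\nabla u|^2\,dx$ and $\int_{B_r}(f(x)-f(x_0))\,(\ldots)\,dx$; the Sobolev condition $p>n$ (via Morrey) exactly ensures that $|\nabla \mathbb{A}|$ gives an admissible error on small balls, while $\omega_f$ controls the $f$-error. Under (H3) this error is $dr/r$-integrable; under (H4) with $a\geq 1$ (resp. $a>2$) it is strong enough to transfer into Monneau (resp. to yield a quantitative rate for blow-up uniqueness via Weiss).

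From the existence of $W(0^+,x_0)$ and the $W^{2,p}$-bounds, the rescalings $u_{x_0,r}(y):=u(x_0+ry)/r^2$ subconverge in $C^{1,\alpha}_{\mathrm{loc}}$ to a global solution of the constant-coefficient obstacle problem with matrix $\mathrm{Id}$ and source $f(x_0)>c_0$. Caffarelli's classification then forces $u_0$ to be either a half-space solution or a $2$-homogeneous polynomial solution, and $W(0^+,x_0)$ takes accordingly only two possible values. This dichotomy defines the disjoint decomposition $\partial\{w=\psi\}\cap\Omega = \Reg(w)\cup\Sing(w)$; upper semicontinuity of $x_0\mapsto W(0^+,x_0)$, a standard consequence of the quasi-monotonicity, makes $\Reg(w)$ relatively open and $\Sing(w)$ relatively closed.

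Part (i) then follows from uniqueness of blow-ups at regular points, a consequence of the strict stability of the half-space profile combined with the Weiss rate under $a>2$; the implicit function theorem yields $C^1$ regularity of $\partial\{w=\psi\}$ near $\Reg(w)$, with the rate integrating to the absolute continuity of the normal, and in the H\"older case one recovers $C^{1,\beta}$ via boundary Harnack or a partial hodograph transform. Part (ii) relies on Monneau's quasi-monotonicity, which under (H4) with $a\geq 1$ forces uniqueness of the $2$-homogeneous polynomial blow-up $p_{x_0}$ at each singular point and continuity in $x_0$; stratifying by $k:=\dim\ker D^2 p_{x_0}$ and applying Whitney's extension and the implicit function theorem, as in the Garofalo--Petrosyan--Monneau scheme, produces the covering of each $S_k$ by countably many $C^1$ $k$-dimensional manifolds. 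The main technical obstacle I anticipate is the derivation of the two quasi-monotonicity formulas with only $W^{1,p}$ control on $\mathbb{A}$: one must handle traces on spheres of $W^{1,p}$-functions carefully, replace the classical Pohozaev--Rellich identity by a distributional version valid in this regularity class, and sharply balance the $W^{1,p}$-error against the Dini modulus $\omega_f$ so that the resulting error remains $dr/r$-integrable with the quantitative strength required by each of (H3) and (H4).
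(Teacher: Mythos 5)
Your overall architecture (reduce to $u=w-\psi$, establish $W^{2,p}\cap C^{1,1-n/p}$ regularity, prove Weiss and Monneau quasi-monotonicity, classify blow-ups, then stratify via Weiss for $\Reg$ and Monneau for $\Sing$) matches the paper. The substantive divergence is in \emph{which} monotone quantity you propose to use, and this is precisely where the paper locates its contribution.

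You propose to work with the weighted Weiss energy $W(r,x_0)$ carrying $\mathbb{A}$ and $f$ inside the integrand, and the weighted Monneau quantity, and to control the resulting Rellich--Ne\v cas error terms $\int_{B_r}|\nabla\mathbb{A}|\,|\nabla u|^2\,dx$ using Morrey and the quadratic growth. This is exactly the route of \cite{FoGeSp15} and \cite{Ger17}, which the paper explicitly sets out to \emph{avoid}: the authors stress in the introduction and again at the start of Section~3 that those approaches ``involve the derivation of the matrix field $\mathbb{A}$'' and that the whole point of the present note is a different monotone quantity. Concretely, the paper freezes $\mathbb{A}$ and $f$ at $x_0\in\Gamma_u$ via the affine change of variables $x\mapsto x_0+f^{-1/2}(x_0)\mathbb{A}^{1/2}(x_0)\,x =: x_0+\LL(x_0)x$, defines $u_{\LL(x_0)}(x):=u(x_0+\LL(x_0)x)$, and then applies the \emph{pure constant-coefficient} Weiss energy
\[
\Phi_u(x_0,r)=\frac1{r^{n+2}}\int_{B_r}\bigl(|\nabla u_{\LL(x_0)}|^2+2u_{\LL(x_0)}\bigr)\,dx-\frac2{r^{n+3}}\int_{\partial B_r}u_{\LL(x_0)}^2\,d\mathcal H^{n-1}
\]
to this transformed function. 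Since the quadratic form inside is Euclidean, the Pohozaev/Rellich computation is the \emph{classical} one and $\nabla\mathbb{A}$ never appears in the identity. All deviation from the Laplacian is absorbed into the source term: from $\Delta u_{\LL(x_0)}=1+f_{x_0}(x)$ with $f_{x_0}$ collecting the discrepancies $(c_{ij}-\delta_{ij})\partial_{ij}u_{\LL(x_0)}$, $\diw\mathbb{C}^i_{x_0}\,\partial_i u_{\LL(x_0)}$ and $f_{\LL(x_0)}/f(x_0)-1$. These are then controlled by the second key ingredient, Proposition~\ref{p:esitmate D2u}: $\|\nabla^2 u\|_{L^p(B_r(x_0))}\le Cr^{n/p}$ at free boundary points, giving the integrable error $\omega(r)=\omega_f(r)+r^{1-n/p}$ in \eqref{e:Weiss}. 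You do invoke the quadratic growth, but you attach it to the error terms of a weighted Rellich--Ne\v cas identity that you do not actually derive and whose validity for merely $W^{1,p}$ coefficients is exactly the obstacle the paper's freezing trick is designed to bypass (you yourself flag this as the ``main technical obstacle'' without resolving it). So your plan is not wrong in spirit, but as written it leaves the central lemma unproved, whereas the paper's choice of monotone quantity makes that lemma follow from elementary computations. A secondary point: for $W^{2,p}_{\mathrm{loc}}$ regularity the paper does not use penalization; it cites Ural'tseva/\cite{FoGerSp17} for \eqref{PDE_u}, then Schauder for $C^{1,1-n/p}_{\mathrm{loc}}$ and Miranda's uniqueness theorem for the non-divergence form to upgrade to $W^{2,p}_{\mathrm{loc}}$.
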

Theorem~\ref{t:linear} has been proved by Caffarelli for suitably regular matrix fields,
and it is the resume of his long term program on the subject (cf. for instance \cite{Caf77, Caf80, Caf98-Fermi, Caf98} 
and the books \cite{KS, CS, PSU} for more details and references also on related problems). 
Let us also remark that very recently the fine structure of the set of singular points for the Dirichlet energy has been unveiled in the 
papers by Colombo, Spolaor and Velichkov \cite{CoSpVe17} and 
Figalli and Serra \cite{FiSe17} by means of a logarithmic 
epiperimetric inequality and new monotonicity formulas, respectively.  

In the last years Theorem~\ref{t:linear} has been extended to 
the case in which $\mathbb{A}$ either is
Lipschitz continuous in \cite{FoGeSp15} or belongs to a fractional Sobolev space $W^{1+s,p}$  in \cite{Ger17}, with $s$, $p$ and 
$n$ suitably related, and also in some nonlinear cases \cite{FoGerSp17}. The last papers follow the variational approach 
to free boundary analysis developed 
remarkably by Weiss \cite{Weiss} and by Monneau \cite{Monneau03}.
The extensions of Weiss' and Monneau's quasi-monotonicity formulas obtained in the papers \cite{FoGeSp15,Ger17} 
hinge upon a generalization of the Rellich and Ne\v{c}as' inequality due to Payne and Weinberger (cf. \cite{Kukavica}). 
On a technical side they involve the derivation of the matrix field $\mathbb{A}$. 
The main difference contained in the present note with respect to the papers \cite{FoGeSp15,Ger17}  
concerns the monotone quantity itself. Indeed, rather than considering the natural quadratic energy associated to 
the obstacle problem under study, we establish quasi-monotonicity for a related constant coefficient quadratic form. 
The latter result is obtained thanks to a freezing argument inspired by some computations of Monneau 
(cf. \cite[Section~6]{Monneau03}) in combination with the well-known quadratic lower bound on the growth of solutions 
from free boundary points (see Section~\ref{s:q-mon formula} for more details).
Such an insight, though elementary, has been overlooked in the literature and enables us to obtain
Weiss' and Monneau's quasi-monotonicity formulas under the milder assumptions (H1) and (H3) (the latter
having no role if $\psi=0$), since the matrix field $\mathbb{A}$ is not differentiated along the derivation process 
of the quasi-monotonicity formulas.

To conclude this introduction we briefly resume the structure of the paper: standard preliminaries for the classical obstacle problem 
are collected in Section~\ref{s:prel}. 
The mentioned generalizations of Weiss' and Monneau's quasi-monotonicity formulas are dealt with in Section~\ref{s:q-mon formula}, 
finally Section~\ref{s:applications} contains the applications to the free boundary stratification for quadratic problems.

%%%%%%%%%%%%%%%%%%%%%%%%%%%%%%%%%%%
%%%%%%%%%%%%%%%%%%%%%%%%%%%%%%%%%%%
%%%%%%%%%%%%%%%%%%%%%%%%%%%%%%%%%%%
%%%%%%%%%%%%%%%%%%%%%%%%%%%%%%%%%%%
%%%%%%%%%%%%%%%%%%%%%%%%%%%%%%%%%%%
%%%%%%%%%%%%%%%%%%%%%%%%%%%%%%%%%%%
%%%%%%%%%%%%%%%%%%%%%%%%%%%%%%%%%%%
%
%	SECTION 1
%
%%%%%%%%%%%%%%%%%%%%%%%%%%%%%%%%%%%
%%%%%%%%%%%%%%%%%%%%%%%%%%%%%%%%%%%
%%%%%%%%%%%%%%%%%%%%%%%%%%%%%%%%%%%
%%%%%%%%%%%%%%%%%%%%%%%%%%%%%%%%%%%
%%%%%%%%%%%%%%%%%%%%%%%%%%%%%%%%%%%
%%%%%%%%%%%%%%%%%%%%%%%%%%%%%%%%%%%
%%%%%%%%%%%%%%%%%%%%%%%%%%%%%%%%%%%
\section{Preliminaries}\label{s:prel}

% 
% \begin{proposition}
% There exists a unique solution for the minimum problem 
% \begin{equation}\label{p:problema ad ostacolo}
% \inf_K \mathscr{E}[\cdot],
% \end{equation}
% where 
% \begin{equation}
%  K:=\{v\in W^{1,2}(\Omega)\, |\,  v\geq 0\, \Ln\textit{-a.e. on}\,\, \Omega,\, \mathrm{Tr}(v)=g\,\textit{on}\,\, \partial\Omega \},
% \end{equation}
% $g\in H^\frac{1}{2}(\partial\Omega)$ being a nonnegative function.
% \end{proposition}
Throughout the section we use the notation introduced in Section~\ref{s:intro} and adopt Einstein' summation convention.

We first reduce ourselves to the zero obstacle problem. 
Let $w$ be the unique minimizer of $\mathcal{E}$ over $\mathcal{K}_{\psi,g}$, 
and define $u:=w-\psi$. Then, $u$ is the unique minimizer of 
 \begin{equation}\label{e:enrg2}
 \mathscr{E}(v):=\int_\Omega \big(\langle \mathbb{A}(x)\nabla v(x),\nabla v(x)\rangle +2f(x)v(x)\big)\,dx,
\end{equation}
over
\[
\mathbb{K}_{\psi,g} := \big\{v\in W^{1,2}(\Omega):\,v\geq 0\;\;\Ln\,\text{ a.e. on } \Omega,\,
\textup{Tr}(v)=g-\psi \,\text{ on } \partial\Omega\big\}\,,
\]
where $f=h-\diw(\mathbb{A}\nabla\psi)$. Clearly, $\partial\{w=\psi\}\cap\Omega=\partial\{u=0\}\cap\Omega$, therefore we shall establish 
all the results in Theorem~\ref{t:linear} for $u$ (notice that assumptions (H3) and (H4) are formulated exactly in terms of $f$).

Note that $u$ satisfies a PDE both in the distributional sense and a.e. on $\Omega$,
elliptic regularity then applies to $u$ itself to establish its smoothness.
The next result had been established by Ural'tseva in \cite{Ural87}
with a different proof.
\begin{proposition}\label{p:PDE u}
Let $u$ be the minimum of $\mathscr{E}$ on $\mathbb{K}_{\psi,g}$. Then 
 \begin{equation}\label{PDE_u}
  \mathrm{div}(\mathbb{A}\nabla u)=f\chi_{\{u>0\}} %\qquad \textit{$\Ln$-a.e. on $\Omega$ and in $\mathcal{D}'(\Omega)$}. 
 \end{equation}
$\Ln$-a.e. on $\Omega$ and in $\mathcal{D}'(\Omega)$.
 Moreover, $u\in W^{2,p}_{loc}\cap C^{1,1-\frac{n}{p}}_{loc}(\Omega)$.
\end{proposition}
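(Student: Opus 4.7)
The plan is to first derive the variational inequality associated with the minimality of $u$, use it to get the equation on the non-coincidence set $\{u>0\}$, then bootstrap to $W^{2,p}_{loc}$ regularity, and finally complete the pointwise identity across the free boundary via Stampacchia's lemma.

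Since $u$ minimizes the strictly convex functional $\mathscr{E}$ on the convex set $\mathbb{K}_{\psi,g}$, testing with perturbations $u+t\phi$ for $\phi\in W^{1,2}_0(\Omega)$ with $\phi\geq 0$ and $t>0$ yields
\[
\int_\Omega \bigl(\langle \mathbb{A}\nabla u,\nabla\phi\rangle + f\phi\bigr)\,dx\geq 0,
\]
that is, $-\mathrm{div}(\mathbb{A}\nabla u) + f \geq 0$ in $\mathcal{D}'(\Omega)$. By De~Giorgi--Nash--Moser applied to this subsolution property with bounded right-hand side, $u$ is locally H\"older continuous, so $\{u>0\}$ is open; testing with $\pm\phi$ for $\phi\in C_c^\infty(\{u>0\})$ then gives the distributional identity $\mathrm{div}(\mathbb{A}\nabla u)=f$ on $\{u>0\}$. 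To promote this to $W^{2,p}_{loc}$ regularity on all of $\Omega$, I would use a penalization argument: approximate $u$ by the smooth solution $u_\varepsilon$ of $\mathrm{div}(\mathbb{A}\nabla u_\varepsilon) = f\,\beta_\varepsilon(u_\varepsilon)$, where $\beta_\varepsilon$ is a smooth monotone approximation of the Heaviside function, with the same boundary data. Rewriting in non-divergence form as $a_{ij}\partial_{ij} u_\varepsilon = f\beta_\varepsilon(u_\varepsilon) - (\partial_i a_{ij})\partial_j u_\varepsilon$, the Sobolev embedding $W^{1,p}\hookrightarrow C^{0,1-\sfrac{n}{p}}$ available since $p>n$ provides continuous leading coefficients, so interior Calder\'on--Zygmund estimates for non-divergence form equations give uniform $W^{2,p}_{loc}$ bounds on $u_\varepsilon$. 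Passing to the limit $\varepsilon\to 0$ yields $u\in W^{2,p}_{loc}(\Omega)$, and Morrey's embedding then gives $u\in C^{1,1-\sfrac{n}{p}}_{loc}(\Omega)$.

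With $u\in W^{2,p}_{loc}$ established, Stampacchia's lemma applied to the nonnegative function $u$ gives $\nabla u=0$ a.e. on $\{u=0\}$; applied once more to each $\partial_j u$, it yields $\partial_{ij} u = 0$ a.e. on $\{u=0\}$. Expanding
\[
\mathrm{div}(\mathbb{A}\nabla u) = a_{ij}\partial_{ij} u + (\partial_i a_{ij})\partial_j u,
\]
both summands vanish a.e. on $\{u=0\}$, while $\mathrm{div}(\mathbb{A}\nabla u) = f$ holds pointwise a.e. on $\{u>0\}$; this gives the identity almost everywhere in $\Omega$, and the distributional version follows since both sides are locally bounded. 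The main technical obstacle is the $W^{2,p}_{loc}$ regularity across the free boundary: on $\{u>0\}$ alone Calder\'on--Zygmund is immediate, but obtaining estimates that do not degenerate near $\partial\{u>0\}$ requires the penalization device to regularize the jump of $\chi_{\{u>0\}}$ and to control the lower-order term $(\partial_i a_{ij})\partial_j u_\varepsilon$ uniformly in $\varepsilon$, exploiting the a priori Lipschitz bound on $u_\varepsilon$ inherited from the obstacle problem.
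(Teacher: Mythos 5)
Your proposal is a valid alternative route for the distributional/a.e.\ PDE, but the regularity argument diverges substantially from the paper's and contains a gap. The paper does not penalize at all: it first invokes a known reference for \eqref{PDE_u}, then (since $\mathbb{A}\in C^{0,1-\sfrac np}_{loc}$ by Morrey and the right-hand side $f\chi_{\{u>0\}}$ is bounded) applies Schauder estimates for divergence-form equations to obtain $u\in C^{1,1-\sfrac np}_{loc}$ \emph{directly}; with $\nabla u$ now locally bounded, it freezes $\varphi:=f\chi_{\{u>0\}}-\diw\mathbb{A}^j\,\partial_j u\in L^p_{loc}$ as a fixed datum, solves the non-divergence equation $a_{ij}\partial_{ij}v=\varphi$ in $W^{2,p}_{loc}$ by \cite[Cor.~9.18]{GT}, and identifies $v=u$ by Miranda's uniqueness theorem \cite{Mir63}. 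This dispenses with the approximation scheme entirely and avoids any chicken-and-egg issue with the lower-order term.

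The gap in your penalization route is precisely that point. To apply Calder\'on--Zygmund to $a_{ij}\partial_{ij}u_\varepsilon=f\beta_\varepsilon(u_\varepsilon)-(\partial_i a_{ij})\partial_j u_\varepsilon$ you need $\nabla u_\varepsilon$ in $L^\infty_{loc}$ \emph{uniformly in} $\varepsilon$, since $\partial_i a_{ij}$ is only $L^p$. You attribute this to ``the a priori Lipschitz bound \ldots inherited from the obstacle problem,'' but the penalized equation is not an obstacle problem, and no bound is inherited from it; the uniform $C^{1,\alpha}_{loc}$ estimate must instead be derived from the penalized equation itself (divergence form, $C^{0,1-\sfrac np}$ leading coefficients, uniformly bounded right-hand side $f\beta_\varepsilon(u_\varepsilon)$, plus a uniform $L^\infty$ bound on $u_\varepsilon$ from the maximum principle) before the $W^{2,p}$ step. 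With that inserted your scheme would close, and you would also need to justify $u_\varepsilon\to u$; but as written the order of estimates is not coherent and the crucial step is asserted rather than proved. The final reduction via Stampacchia's lemma ($\nabla u=0$ and $\nabla^2 u=0$ a.e.\ on $\{u=0\}$) is correct and is exactly what the cited reference uses.
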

\begin{proof}
For the validity of \eqref{PDE_u} we refer to 
\cite[Proposition 3.2]{FoGerSp17} where the result is proven in the broader
context of variational inequalities (see also  \cite[Proposition 2.2]{FoGeSp15}).

From this, by taking into account that 
$\mathbb{A}\in C^{0,1-\sfrac{n}{p}}_{loc}(\Omega,\R^{n\times n})$
in view of Morrey embedding theorem, Schauder estimates then yields 
$u\in C_{loc}^{1,1-\sfrac{n}{p}}(\Omega)$ (cf. \cite[Theorem 3.13]{HL}). 

Next consider the equation
 \begin{align}\label{PDE u Miranda}
  %\sum_{ij}
  a_{ij}\, \frac{\partial^2v}{\partial x_i\partial x_j} = f\chi_{\{u>0\}}-%\sum_{j}
  \mathrm{div}\mathbb{A}^j\frac{\partial u}{\partial x_j}=:\varphi,
 \end{align}
% where the symbol $v_{ij}$ is the partial second derivative in the variable $x_i$ and $x_j$ and 
where $\mathbb{A}^j$ denotes the $j$-column of $\mathbb{A}$. 
 Since $\nabla u\in L^\infty_{loc}(\Omega,\R^n)$ and $\diw \mathbb{A}^j\in L^{p}(\Omega)$ for all $j\in\{1,\dots,n\}$,
 then $\varphi\in L^{p}_{loc}(\Omega)$. 
 \cite[Corollary 9.18]{GT} implies the uniqueness of a solution 
 $v\in W_{loc}^{2,p}(\Omega)$ to \eqref{PDE u Miranda}. 
 By taking into account the identity 
 $\mathrm{Tr}(\mathbb{A}\nabla^2 v)=\mathrm{div}(\mathbb{A}\nabla v)-%\sum_{j}
 \mathrm{div}\mathbb{A}^j\frac{\partial v}{\partial x_j}$, \eqref{PDE u Miranda} rewrites as 
\begin{equation}
 \mathrm{div}(\mathbb{A}\nabla v)-%\sum_{j}
 \mathrm{div}\mathbb{A}^j\frac{\partial v}{\partial x_j} =\varphi,
\end{equation}
we have that $u$ and $v$ are two solutions. Then by \cite[Theorem 1.I]{Mir63} we obtain $u=v$.
\end{proof}

%\subsection{Preliminary free boundary analysis}

We recall next the standard notations for the coincidence set and 
for the free boundary 
 \begin{equation}
  \Lambda_u=\{x\in\Omega:\,u(x)=0\}\,,
%  \qquad\qquad \Omega\setminus\Lambda_u=\{u>0\}\cap \Omega 
  \qquad\Gamma_u =\partial\Lambda_u\cap \Omega.
 \end{equation}
For any point $x_0\in \Gamma_u$, we introduce the family of rescaled functions
\begin{equation}\label{u_x_0 r}
u_{x_0,r}(x):=\frac{u(x_0+rx)}{r^2}
\end{equation}
for $x\in\frac 1r(\Omega-\{x_0\})$. The existence of $C^{1,\alpha}$-limits as 
$r\downarrow 0$ of the latter family is standard by noting that the rescaled functions 
satisfy an appropriate PDE and then uniform $W^{2,p}$ estimates. 

\begin{proposition}[{\cite[Proposition 4.1]{Ger17}}]\label{prop u_r limitata W2p}
 Let $u$ be the unique minimizer of $\mathscr{E}$ over $\mathbb{K}_{\psi,g}$, and $K\subset\Omega$ a compact set.
Then for every $x_0\in K\cap\Gamma_u$, for every $R>0$
 there exists a constant $C=C(n,p,\Lambda,R,K,\|f\|_{L^\infty},
 \|\mathbb{A}\|_{W^{1,p}})>0$ such that, for every $r\in (0,\frac{1}{4R}\mathrm{dist}(K, \partial\Omega))$
 \begin{equation}\label{u_r limitata W2p}
  \|u_{x_0,r}\|_{W^{2,p}(B_R)}\leq C.
 \end{equation}
In particular, $(u_{x_0,r})_r$ is equibounded in $C^{1,\gamma}_{\mathrm{loc}}$ for $\gamma\in(0, 1-\sfrac{n}{p}]$.
\end{proposition}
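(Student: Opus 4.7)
The plan is to rescale the PDE from Proposition~\ref{p:PDE u}, verify that the rescaled coefficients enjoy uniform bounds, and then combine a quadratic growth estimate at free-boundary points with the elliptic $W^{2,p}$ argument already used to prove Proposition~\ref{p:PDE u}.

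A direct change of variables $y=x_0+rx$ in \eqref{PDE_u} turns it into
\begin{equation*}
\diw\bigl(\mathbb{A}_r\,\nabla u_{x_0,r}\bigr)=f_r\,\chi_{\{u_{x_0,r}>0\}}\qquad\text{in }B_R,
\end{equation*}
with $\mathbb{A}_r(x):=\mathbb{A}(x_0+rx)$ and $f_r(x):=f(x_0+rx)$. The rescaled coefficients inherit uniform bounds in $r$ and in $x_0\in K$: the ellipticity constant and $\|f_r\|_{L^\infty}$ are preserved, while
\begin{equation*}
\|\nabla\mathbb{A}_r\|_{L^p(B_R)}^p=r^{p-n}\int_{B_{rR}(x_0)}|\nabla\mathbb{A}|^p\,dy
\end{equation*}
is not only bounded but in fact vanishes as $r\downarrow 0$ precisely because $p>n$. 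This subcritical Sobolev scaling is the structural reason the whole rescaling argument works.

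The $L^\infty$-ingredient is then supplied by the quadratic growth at free boundary points, namely a constant $C=C(n,p,\Lambda,K,\|f\|_{L^\infty},\|\mathbb{A}\|_{W^{1,p}})$ for which
\begin{equation*}
\sup_{B_\rho(x_0)} u\leq C\rho^2\qquad\forall\,x_0\in K\cap\Gamma_u,\ \forall\,\rho\in(0,\rho_0).
\end{equation*}
This is the variable-coefficient analogue of Caffarelli's optimal $C^{1,1}$ estimate for the classical obstacle problem; because $u\geq 0$ with $u(x_0)=|\nabla u(x_0)|=0$, it is equivalent to $C^{1,1}$-regularity of $u$ at free boundary points, and under (H1)--(H3) it can be produced by a comparison argument exploiting the strict positivity $f\geq c_0>0$, as carried out in \cite{FoGeSp15}. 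Rescaling it immediately yields $\|u_{x_0,r}\|_{L^\infty(B_R)}\leq CR^2$ uniformly.

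With the $L^\infty$-bound and the uniform bounds on $\mathbb{A}_r,f_r$ in hand, the final step is to rerun the elliptic regularity step used for Proposition~\ref{p:PDE u} on $u_{x_0,r}$. Writing the rescaled equation in non-divergence form
\begin{equation*}
a_{ij,r}\,\partial_{ij}u_{x_0,r}=f_r\,\chi_{\{u_{x_0,r}>0\}}-\diw(\mathbb{A}_r^j)\,\partial_j u_{x_0,r},
\end{equation*}
the Calder\'on--Zygmund estimate \cite[Corollary~9.18]{GT} together with Miranda's theorem \cite[Theorem~1.I]{Mir63}, applied on a slightly larger concentric ball, produces a $W^{2,p}(B_R)$ bound whose constant depends only on the uniform quantities listed above and on the $L^p$-norm of the right-hand side; the latter is controlled by the $L^\infty$-bound on $u_{x_0,r}$ after a short bootstrap upgrading the integrability of $\nabla u_{x_0,r}$ past $p$. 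Morrey's embedding $W^{2,p}\hookrightarrow C^{1,1-n/p}$ then delivers the $C^{1,\gamma}_{\mathrm{loc}}$ equiboundedness. The one genuine obstacle is the quadratic growth: unlike the interior $C^{1,1-n/p}$ regularity of Proposition~\ref{p:PDE u}, which is a pure elliptic-regularity statement, it uses essentially the obstacle structure (nonnegativity of $u$ together with the strict positivity of $f$) and cannot be read off from the PDE alone.
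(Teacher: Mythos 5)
The paper does not prove this proposition itself; it cites \cite[Proposition 4.1]{Ger17}, so a line-by-line comparison with ``the paper's own proof'' is not possible. That said, your outline is the natural and, as far as I can tell, the intended one: rescale the divergence-form equation, observe that $\|\nabla\mathbb{A}_r\|_{L^p(B_R)} = r^{1-n/p}\|\nabla\mathbb{A}\|_{L^p(B_{rR}(x_0))}\to 0$ precisely because $p>n$, feed in a uniform $L^\infty$ bound on $u_{x_0,r}$ coming from the quadratic growth at free-boundary points, and then run the non-divergence Calder\'on--Zygmund estimate \cite[Corollary~9.18]{GT} exactly as in Proposition~\ref{p:PDE u}. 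Two remarks, one substantive and one expository.

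The substantive one is that the phrase ``upgrading the integrability of $\nabla u_{x_0,r}$ past $p$'' is not strong enough. The troublesome term in the non-divergence right-hand side is $\diw(\mathbb{A}_r^j)\,\partial_j u_{x_0,r}$, with $\diw(\mathbb{A}_r^j)\in L^p$ uniformly. If $\nabla u_{x_0,r}\in L^q$ with $q<\infty$, the product lands only in $L^{pq/(p+q)}\subsetneq L^p$, which is not good enough to close the $W^{2,p}(B_R)$ estimate. What you actually need is a uniform $L^\infty$ bound on $\nabla u_{x_0,r}$ on an intermediate ball. This is available, but the route is a Schauder (or De Giorgi--Nash--Moser) interior gradient estimate for the divergence-form equation $\diw(\mathbb{A}_r\nabla u_{x_0,r})=f_r\chi$, using the uniform H\"older continuity of $\mathbb{A}_r$ and the $L^\infty$ bound on the right-hand side and on $u_{x_0,r}$; it is not a finite integrability bootstrap. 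Once $\nabla u_{x_0,r}\in L^\infty$ uniformly, the right-hand side is uniformly in $L^p$ and Calder\'on--Zygmund finishes it. Also note that Miranda's theorem is not really needed here: $u_{x_0,r}\in W^{2,p}_{loc}$ is already known from Proposition~\ref{p:PDE u} by rescaling, so what is wanted is only the \emph{a priori} bound, which \cite[Corollary~9.18]{GT} supplies directly.

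The expository remark concerns the ordering with Proposition~\ref{p:esitmate D2u}: the present paper states the quadratic growth \eqref{e:u e grad u limitate} as ``easily deduced from'' Proposition~\ref{prop u_r limitata W2p}, whereas your argument feeds the quadratic upper bound \emph{into} the proof of Proposition~\ref{prop u_r limitata W2p}. This is not circular provided the upper bound is established independently, as you claim; one should, however, be explicit that this step (a comparison argument using $u\ge 0$, $f\ge 0$, and the PDE, in the spirit of \cite{BlankHao15}) does not follow from the $C^{1,1-n/p}$ regularity in Proposition~\ref{p:PDE u}, which at a free boundary point only yields $|u(x)|\lesssim|x-x_0|^{2-n/p}$, strictly weaker than quadratic. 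Your final paragraph already flags this correctly. Note also that the constant should not, and in your argument does not, depend on $c_0$: the quadratic upper bound uses only $f\ge 0$ and $\|f\|_{L^\infty}$, which is consistent with the constants listed in the statement.
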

The functions arising in this limit process are called blow-up limits.
\begin{corollary}[Existence of blow-ups]
Let $u$ be the unique minimizer of $\mathscr{E}$ over $\mathbb{K}_{\psi,g}$, and let $x_0\in \Gamma_u$. 
Then, for every sequence $r_k\downarrow 0$ there exists a subsequence $(r_{k_j})_j\subset (r_k)_k$ such 
that the rescaled functions $(u_{x_0,r_{k_j}})_j$ converge in $C^{1,\gamma}_{\mathrm{loc}}$, $\gamma\in(0,1-\sfrac np)$.
 %We define these limits as \emph{blow-ups}.
\end{corollary}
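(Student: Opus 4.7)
The proof is a direct consequence of Proposition \ref{prop u_r limitata W2p} combined with a standard compactness/diagonal argument, so the plan is short.

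First, fix $x_0\in\Gamma_u$ and a sequence $r_k\downarrow 0$. Choose a compact neighborhood $K\subset\Omega$ of $x_0$, and pick any $R>0$. For all sufficiently large $k$ we have $r_k<\tfrac{1}{4R}\mathrm{dist}(K,\partial\Omega)$, so Proposition \ref{prop u_r limitata W2p} applies and yields the uniform bound
\[
\|u_{x_0,r_k}\|_{W^{2,p}(B_R)}\leq C(R),
\]
with $C(R)$ independent of $k$. Since $p>n$, Morrey's embedding gives the corresponding uniform bound in $C^{1,1-n/p}(\overline{B_R})$.

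Next, for any $\gamma\in(0,1-n/p)$ the embedding $C^{1,1-n/p}(\overline{B_R})\hookrightarrow\hookrightarrow C^{1,\gamma}(\overline{B_R})$ is compact (by the Arzel\`a--Ascoli theorem applied to the family and its gradients, both of which are equicontinuous with a uniform H\"older modulus $1-n/p$). Hence from $(u_{x_0,r_k})_k$ we can extract a subsequence converging in $C^{1,\gamma}(\overline{B_R})$.

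Finally, to upgrade convergence from a fixed $B_R$ to $C^{1,\gamma}_{loc}(\R^n)$, I run the standard Cantor diagonal procedure: for $R=1$ extract a subsequence converging in $C^{1,\gamma}(\overline{B_1})$; from it extract a further subsequence converging in $C^{1,\gamma}(\overline{B_2})$; iterate for every $R\in\mathbb{N}$ and take the diagonal. The resulting subsequence $(u_{x_0,r_{k_j}})_j$ converges in $C^{1,\gamma}(\overline{B_R})$ for every $R>0$, i.e.\ in $C^{1,\gamma}_{loc}(\R^n)$, as required. There is no real obstacle here; the only point that deserves care is ensuring that Proposition \ref{prop u_r limitata W2p} is applicable at every scale $R$, which is why the diagonal extraction (rather than a single extraction) is needed, and this is guaranteed by the fact that $r_k\downarrow 0$.
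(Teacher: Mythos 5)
Your proof is correct and is precisely the standard argument that the paper leaves implicit (the corollary is stated without proof, as an immediate consequence of Proposition~\ref{prop u_r limitata W2p}): uniform $W^{2,p}$ bounds on each $B_R$, Morrey embedding into $C^{1,1-n/p}$, compact embedding into $C^{1,\gamma}$ for $\gamma<1-n/p$, and a diagonal extraction over $R\in\mathbb{N}$.
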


Elementary growth conditions of the solution from free boundary points are easily deduced from Proposition~\ref{prop u_r limitata W2p} 
and the condition $p>n$. In turn, such properties will be crucial in the derivation of the quasi-monotonicity formulas. 
\begin{proposition}\label{p:esitmate D2u}
  Let $u$ be the unique minimizer of $\mathscr{E}$ over $\mathbb{K}_{\psi,g}$. % and let $x_0\in \Gamma_u$. 
 Then for all compact sets $K\subset \Omega$ there exists a constant $C=C(n,p,\Lambda,K,\|f\|_{L^\infty},
 \|\mathbb{A}\|_{W^{1,p}})>0$ %(independent of $r$) 
 such that for all points $x_0\in \Gamma_u\cap K$, and for all $r\in \big(0, \frac{1}{2}\mathrm{dist}(K, \partial\Omega)\big)$ 
 it holds
\begin{equation}\label{e:u e grad u limitate}
 \|u\|_{L^\infty(B_r(x_0))}\leq C\, r^2\,, 
 \qquad \|\nabla u\|_{L^\infty(B_r(x_0),\R^n)}\leq C\, r.   
\end{equation}
 and
\begin{equation}\label{e:estimate D2u}
  \|\nabla^2 u\|_{L^p(B_r(x_0),\R^{n\times n})}
  \leq C\, r^{\sfrac{n}{p}}.
\end{equation}
\end{proposition}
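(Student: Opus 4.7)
The plan is to deduce all three estimates directly from the uniform $W^{2,p}$ bound on the rescalings $u_{x_0,r}$ provided by Proposition~\ref{prop u_r limitata W2p}, applied with $R=1$. The key point is that the assertions of the proposition are nothing but the unscaled versions of uniform $L^\infty$, Lipschitz, and $L^p(\nabla^2)$ bounds on $u_{x_0,r}$ in $B_1$.

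First I would fix a compact set $K\subset\Omega$, a point $x_0\in\Gamma_u\cap K$, and $r\in(0,\tfrac12\mathrm{dist}(K,\partial\Omega))$. Proposition~\ref{prop u_r limitata W2p} (with $R=1$) then yields a constant $C=C(n,p,\Lambda,K,\|f\|_{L^\infty},\|\mathbb{A}\|_{W^{1,p}})$ such that
\[
\|u_{x_0,r}\|_{W^{2,p}(B_1)}\leq C.
\]
Since $p>n$, Morrey's embedding gives $W^{2,p}(B_1)\hookrightarrow C^{1,1-n/p}(\overline{B_1})$, hence in particular
\[
\|u_{x_0,r}\|_{L^\infty(B_1)}+\|\nabla u_{x_0,r}\|_{L^\infty(B_1,\R^n)}\leq C.
\]

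Next I would unwind the rescaling. From $u_{x_0,r}(x)=u(x_0+rx)/r^2$ one computes
\[
\|u_{x_0,r}\|_{L^\infty(B_1)}=r^{-2}\|u\|_{L^\infty(B_r(x_0))},\qquad
\|\nabla u_{x_0,r}\|_{L^\infty(B_1,\R^n)}=r^{-1}\|\nabla u\|_{L^\infty(B_r(x_0),\R^n)},
\]
which immediately yields \eqref{e:u e grad u limitate}. For the Hessian, since $\nabla^2 u_{x_0,r}(x)=\nabla^2 u(x_0+rx)$, the change of variable $y=x_0+rx$ gives
\[
\|\nabla^2 u_{x_0,r}\|_{L^p(B_1,\R^{n\times n})}^p=r^{-n}\|\nabla^2 u\|_{L^p(B_r(x_0),\R^{n\times n})}^p,
\]
so that $\|\nabla^2 u\|_{L^p(B_r(x_0),\R^{n\times n})}\leq C\,r^{n/p}$, which is \eqref{e:estimate D2u}.

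There is no serious obstacle here: the content of the proposition is entirely contained in the uniform $W^{2,p}$ bound of Proposition~\ref{prop u_r limitata W2p}, and the only thing to verify is the correct scaling exponent for each quantity. The mildest subtlety is that the hypothesis $x_0\in\Gamma_u$ is needed only through the applicability of Proposition~\ref{prop u_r limitata W2p} (which already encodes $u(x_0)=0$ and the minimality); it does not reappear explicitly in the scaling argument.
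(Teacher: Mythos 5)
Your argument is exactly the one the paper intends: Proposition~\ref{p:esitmate D2u} is stated without proof, preceded only by the remark that the growth conditions ``are easily deduced from Proposition~\ref{prop u_r limitata W2p} and the condition $p>n$.'' Your scaling identities for $u_{x_0,r}$, $\nabla u_{x_0,r}$ and $\nabla^2 u_{x_0,r}$ are all correct, and the use of Morrey's embedding $W^{2,p}(B_1)\hookrightarrow C^{1,1-n/p}(\overline{B_1})$ to pass from the uniform $W^{2,p}$ bound to $L^\infty$ bounds on $u_{x_0,r}$ and $\nabla u_{x_0,r}$ is the right step.

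One small inaccuracy worth noting: with $R=1$, Proposition~\ref{prop u_r limitata W2p} only yields the uniform $W^{2,p}(B_1)$ bound for $r\in\big(0,\frac14\mathrm{dist}(K,\partial\Omega)\big)$, whereas the statement to be proved asks for $r\in\big(0,\frac12\mathrm{dist}(K,\partial\Omega)\big)$, and changing $R$ does not enlarge the admissible range of $Rr$. The missing range $\big[\frac14 d,\frac12 d\big)$, with $d=\mathrm{dist}(K,\partial\Omega)$, is handled trivially: all such balls $B_r(x_0)$, $x_0\in K$, lie in the fixed compact set $K''=\overline{\{x:\mathrm{dist}(x,K)<\frac12 d\}}\subset\Omega$, on which $u\in W^{2,p}\cap C^{1}$ by Proposition~\ref{p:PDE u} and Morrey; since there $r$ is bounded below by $\frac14 d$, the desired inequalities hold after enlarging $C$ (which is allowed to depend on $K$). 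You should add this remark rather than asserting the $R=1$ application covers the full range.
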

%\begin{proof}
%Since for $x_0\in \Gamma_u$ we have $u(x_0)=0$ and 
%$\nabla u(x_0)=\underline{0}$, a scaling argument and 
%Proposition~\ref{prop u_r limitata W2p} give 
%\eqref{e:u e grad u limitate}.
%Since the constant in \eqref{e:u e grad u limitate} only depends on the constant appearing in \eqref{u_r limitata W2p}, it is 
%uniformly bounded for $x_0\in \Gamma_u\cap K$, $K\subset\Omega$ compact.
%
%%In turn the estimates in \eqref{e:u e grad u limitate} yield an analogous estimate of the second derivative of $u$ near free boundary points. Indeed, 
%Finally, thanks to Proposition~\ref{prop u_r limitata W2p} and by rescaling we have
%\begin{equation}
% \begin{split}
%  \int_{B_r(x_0)}|\nabla^2 u(x)|^p\,dx \stackrel{x=x_0 +ry}{=} r^n \int_{B_1}|\nabla^2 u(x_0+ry)|^p\,dy =r^n \int_{B_1}|\nabla^2 u_r(y)|^p\,dy
%  \leq C\,r^n.
% \end{split}
%\end{equation}
%By passing to the $L^p$-norm we conclude the proof.
%\end{proof}

Finally, we recall the fundamental quadratic detachment property from free boundary points 
that entails non triviality of blow up limits. 
It has been established by Blank and Hao  in \cite[Theorem~3.9]{BlankHao15} under the sole $VMO$ regularity of $\mathbb{A}$,
an assumption weaker than (H1).
\begin{lemma}\label{l:quadratic}
There exists a constant $\vartheta=\vartheta(n,\Lambda,c_0,\|f\|_{L^\infty})>0$ such that, for
every $x_0 \in \Gamma_u$ and $r\in(0,\frac12\textup{dist}(x_0,\partial\Omega))$, it holds
\begin{equation*}%\label{e:quadratic}
\sup_{x \in \partial B_r(x_0)} u(x) \geq \vartheta\,r^2.
\end{equation*}
\end{lemma}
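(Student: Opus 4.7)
My plan is to prove Lemma~\ref{l:quadratic} by adapting Caffarelli's classical nondegeneracy argument (in which one compares $u$ to a suitable quadratic barrier) to the Sobolev-coefficient setting, following Blank--Hao \cite{BlankHao15}.

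Since $x_0\in\Gamma_u=\partial\Lambda_u$, continuity of $u$ furnishes a sequence $y_k\in\{u>0\}$ with $y_k\to x_0$. Because Proposition~\ref{p:PDE u} ensures $\mathrm{div}(\mathbb{A}\nabla u)=f\chi_{\{u>0\}}\geq 0$, so that $u$ is $\mathbb{A}$-subharmonic and $\sup_{\overline{B_r(x_0)}}u=\sup_{\partial B_r(x_0)}u$, it suffices to establish the uniform interior estimate
\[
\sup_{\partial B_\rho(y)}u\;\geq\;\vartheta\,\rho^2\qquad\forall\,y\in\{u>0\},\ \rho\in(0,\tfrac12\mathrm{dist}(y,\partial\Omega))
\]
with $\vartheta=\vartheta(n,\Lambda,c_0,\|f\|_{L^\infty})>0$; the lemma then follows by applying the estimate at $y_k$ with $\rho_k:=r-|y_k-x_0|$ and passing to the limit $y_k\to x_0$, using the inclusion $\partial B_{\rho_k}(y_k)\subset\overline{B_r(x_0)}$.

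In Caffarelli's classical argument the barrier is the quadratic $\tilde\phi_0(x)=\frac{c_0}{2n}(\rho^2-|x-y|^2)\geq 0$, which satisfies $\Delta\tilde\phi_0=-c_0$, vanishes on $\partial B_\rho(y)$, and attains its maximum $\frac{c_0}{2n}\rho^2$ at $y$. Under hypothesis (H1), $\mathrm{div}\mathbb{A}\in L^p\setminus L^\infty$ prevents $\tilde\phi_0$ from being a pointwise $\mathbb{A}$-subsolution. Following Blank--Hao, I replace $\tilde\phi_0$ by the exact weak $W^{1,2}(B_\rho(y))\cap W^{2,p}_{loc}(B_\rho(y))$-solution $\tilde\phi$ of
\[
\mathrm{div}(\mathbb{A}\nabla\tilde\phi)=-c_0\text{ in }B_\rho(y),\qquad\tilde\phi=0\text{ on }\partial B_\rho(y).
\]
By the strong minimum principle, $\tilde\phi>0$ in $B_\rho(y)$; by the Gr\"uter--Widman Green-function estimates (which depend only on uniform ellipticity), $\tilde\phi(y)\geq\kappa\,c_0\rho^2$ for some $\kappa=\kappa(n,\Lambda)>0$. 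The comparison function $w:=u+\tilde\phi$ satisfies $\mathrm{div}(\mathbb{A}\nabla w)=f-c_0\geq 0$ in $\mathcal{D}'(D)$ on $D:=\{u>0\}\cap B_\rho(y)$, so $w$ is $\mathbb{A}$-subharmonic in the divergence sense, and the weak maximum principle (which needs no drift regularity) yields $\sup_D w\leq\sup_{\partial D}w^+$. Splitting $\partial D$ into $\partial B_\rho(y)$ (where $w=u$) and $\{u=0\}\cap B_\rho(y)$ (where $w=\tilde\phi$), and evaluating at $y\in D$ where $w(y)=u(y)+\tilde\phi(y)\geq u(y)+\kappa c_0\rho^2$, one obtains
\[
u(y)+\kappa c_0\rho^2\;\leq\;\max\bigl(\sup_{\partial B_\rho(y)}u,\ \sup_{\{u=0\}\cap B_\rho(y)}\tilde\phi\bigr).
\]
When the maximum is realised on $\partial B_\rho(y)$, the conclusion $\sup_{\partial B_\rho(y)}u\geq\kappa c_0\rho^2$ is immediate; the remaining case is disposed of by an iteration on dyadic radii $\rho_j=\rho/2^j$, exploiting the Green-function upper bound $\tilde\phi\leq K c_0\rho^2$ (with $K=K(n,\Lambda)>0$) which forces $u(y)\leq(K-\kappa)c_0\rho_j^2\to 0$, contradicting $u(y)>0$.

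The main obstacle is precisely the low $W^{1,p}$ regularity of $\mathbb{A}$: the classical quadratic barrier is not a pointwise $\mathbb{A}$-subsolution because $\mathrm{div}\mathbb{A}$ is merely $L^p$. The key insight of Blank--Hao is to use the exact weak solution of the divergence-form PDE as barrier; the comparison $w=u+\tilde\phi$ is then $\mathbb{A}$-subharmonic in the purely distributional sense, and the weak maximum principle applies without any regularity assumption on the drift. The quantitative bound $\tilde\phi(y)\geq\kappa c_0\rho^2$ is provided by Gr\"uter--Widman's Green-function estimates for divergence-form operators with merely bounded measurable coefficients, which depend only on $n$ and the ellipticity constant $\Lambda$.
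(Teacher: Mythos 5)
The paper does not prove this lemma itself: it records it as \cite[Theorem~3.9]{BlankHao15}, established by Blank and Hao even under the weaker VMO hypothesis on $\mathbb{A}$. Your proposal is therefore an attempt to give a proof where the authors gave none, and while the reduction to interior points $y\in\{u>0\}$, the choice of the exact barrier $\tilde\phi$, and the Gr\"uter--Widman lower bound $\tilde\phi(y)\geq\kappa c_0\rho^2$ are all sound, the final step does not close.

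The trouble is the sign of the barrier. In Caffarelli's classical argument one uses $\phi_0(x)=\frac{c_0}{2n}|x-y|^2$, which \emph{vanishes at the center} and is \emph{large on $\partial B_\rho$}; hence on the free-boundary portion $\{u=0\}\cap B_\rho(y)$ of $\partial D$ one has $u-\phi_0=-\phi_0<0<u(y)-\phi_0(y)$, so the maximum is forced onto $\partial B_\rho$ and there is no ``case 2''. Your $\tilde\phi$ has the opposite profile: it vanishes on $\partial B_\rho(y)$ and is positive (indeed of size up to $K c_0\rho^2$) inside. Consequently on $\{u=0\}\cap B_\rho(y)$ one has $w=\tilde\phi>0$, and the maximum of $w$ may genuinely be attained there. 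Your ``case 2'' thus cannot be dismissed. The dyadic iteration you propose only shows that case 2 cannot hold at \emph{every} scale $\rho_j=\rho/2^j$ (otherwise $u(y)\leq(K-\kappa)c_0\rho_j^2\to 0$); it guarantees case 1 at \emph{some} $\rho_{j_0}$, giving $\sup_{\partial B_\rho(y)}u\geq\kappa c_0\rho_{j_0}^2=\kappa c_0\rho^2/4^{j_0}$. But $j_0$ grows like $\log(\rho^2/u(y))$ and blows up as $y\to x_0$ (where $u(y)\to 0$), so the resulting bound degenerates to the trivial inequality $\sup_{\partial B_\rho}u\gtrsim u(y)$ rather than $\gtrsim\rho^2$. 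To close the argument one needs a barrier which, like $\phi_0$, vanishes at $y$, is nonnegative, is comparable to $\rho^2$ on $\partial B_\rho(y)$, \emph{and} is an $\mathbb{A}$-subsolution with $\mathrm{div}(\mathbb{A}\nabla\phi)\leq c_0$; constructing such an object with merely $W^{1,p}$ (or VMO) coefficients is exactly the nontrivial point, and Blank--Hao's mean-value-set machinery is precisely what circumvents it. As written, your proof has a genuine gap at this step.
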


\section{Quasi-monotonicity formulas}\label{s:q-mon formula}

In this section we establish Weiss' and Monneau's type quasi-monotonicity formulas for the quadratic problem. 
As pointed out in Section~\ref{s:intro} the main difference with the existing literature concerns the monotone 
quantity itself. Indeed, rather than considering the natural quadratic energy $\mathscr{E}$ associated 
to the obstacle problem under study, we may consider the classical Dirichlet energy thanks to a normalization. 
In doing this we have been inspired by Monneau \cite[Section~6]{Monneau03}. The advantage 
of this formulation is that the matrix field $\mathbb{A}$ is not
differentiated in deriving the quasi-monotonicity formulas contrary to \cite{FoGerSp17, Ger17}. 
Our additional insight is elementary but crucial: we further exploit the 
quadratic growth of solutions from free boundary points in Proposition~\ref{p:esitmate D2u} to establish quasi-monotonicity.
%This simple observation seems to have been overlooked in the literature.

Let $x_0 \in \Gamma_u$ be any point of the free boundary, then the affine change of variables
\[
x\mapsto x_0+f^{-\sfrac12}(x_0)\mathbb{A}^{\sfrac12}(x_0)x =: x_0 + \LL(x_0)\, x
\]
leads to
\begin{equation}\label{e:cambio di coordinate3}
\mathscr{E}(u)=f^{1-\frac{n}{2}}(x_0)\det(\mathbb{A}^{\sfrac12}(x_0))\,\mathscr{E}_{\LL(x_0)}(u_{\LL(x_0)}),
\end{equation}
where $\Omega_{\LL(x_0)}:=\LL^{-1}(x_0)\,(\Omega-x_0)$, and we have set
\begin{equation}\label{e:enrgA}
\mathscr{E}_{\LL(x_0)}(v):=\int_{\Omega_{\LL(x_0)}}\left(\langle \mathbb{C}_{x_0}\nabla v,\nabla v\rangle 
+ 2\frac{f_{\LL(x_0)}}{f(x_0)}\,v\right)dx,
\end{equation}
with
\begin{gather}
u_{{\LL(x_0)}}(x)  :=u\big(x_0+\LL(x_0)x\big), 
\label{e:cambio di coordinate1}\\
f_{{\LL(x_0)}}(x)  :=f\big(x_0+\LL(x_0)x\big), \notag
%\label{e:cambio di coordinate2}
\\
{\mathbb C}_{x_0}(x)  :=
\mathbb{A}^{-\sfrac12}(x_0)\mathbb{A}(x_0+\LL(x_0)x)\mathbb{A}^{-\sfrac12}(x_0).\notag
\end{gather}
Note that $f_{\LL(x_0)}(\underline{0})=f(x_0)$ and ${\mathbb C}_{x_0}(\underline{0})=\mathrm{Id}$.
Moreover, the free boundary is transformed under this map into
\[
\Gamma_{u_{\LL(x_0)}}=\LL^{-1}(x_0)(\Gamma_u-x_0),
\] 
and the energy $\mathscr{E}$ in \eqref{e:enrg} is minimized by $u$ if and only if 
$\mathscr{E}_{\LL(x_0)}$ in \eqref{e:enrgA} is minimized by the function $u_{\LL(x_0)}$
in \eqref{e:cambio di coordinate1}.

In addition, rewriting the Euler-Lagrange equation for $u_{\LL(x_0)}$ in non-divergence 
form we get on $\Omega_{\LL(x_0)}$: 
\begin{equation*}%\label{e:nondivergence}
 c_{ij}(x)\,\frac{\partial^2u_{\LL(x_0)}}{\partial x_i\partial x_j} 
 + \mathrm{div}\mathbb{A}^i(x)\, \frac{\partial u_{\LL(x_0)}}{\partial x_i} %+ c(x)\, u 
 = \frac{f_{\LL(x_0)}(x)}{f(x_0)}\chi_{\{u_{\LL(x_0)}>0\}}\,.
\end{equation*}
(using again Einstein's convention) with ${\mathbb C}_{x_0}=(c_{ij})_{i,j=1,\ldots,n}$.
Moreover, we may further rewrite the latter equation on $\Omega_{\LL(x_0)}$ as
\begin{equation}\label{e:PDE u f1}
\begin{split}
 \Delta  u_{\LL(x_0)} &= 1
 + \Big( \frac{f_{\LL(x_0)}(x)}{f(x_0)}\chi_{\{u_{\LL(x_0)}>0\}}-1 
 -\big(c_{ij}(x)-\delta_{ij}\big)\frac{\partial^2u_{\LL(x_0)}}{\partial x_i\partial x_j} 
 -\diw\mathbb{C}^i_{x_0}(x)\, \frac{\partial u_{\LL(x_0)}}{\partial x_i}\Big)\\
 &=: 1+f_{x_0}(x)\,.
\end{split}
\end{equation}
We are now ready to establish Weiss' and Monneau's quasi-monotonicity formulas for $u$ by using equality
\eqref{e:PDE u f1} and Proposition \ref{p:esitmate D2u}.

\subsection{Weiss' quasi-monotonicity formula}

In this section we consider the Weiss' energy 
 \begin{equation}\label{e:Weiss energy}
  \Phi_{u}(x_0,r):=\frac{1}{r^{n+2}}\int_{B_r} \big(|\nabla u_{\LL(x_0)}|^2 + 2\,u_{\LL(x_0)}\big)\,dx 
  - \frac{2}{r^{n+3}}\int_{\partial B_r} u_{\LL(x_0)}^2\, d\mathcal{H}^{n-1}\,,
 \end{equation}
 $x_0\in\Gamma_u$, and prove its quasi-monotonicity. %(recall that we are working under assumption (H5)).
\begin{theorem}[Weiss' quasi-monotonicity formula]\label{t:Wp>n} 
Under assumptions (H1)-(H3), if $K\subset\Omega$ is a compact set, there is a constant $C=C(n,p,\Lambda,c_0,K,\|f\|_{L^\infty},\|\mathbb{A}\|_{W^{1,p}})>0$ 
such that for all $x_0\in K\cap \Gamma_u$
\begin{equation}\label{e:Weiss}
\frac{d}{dr}\Big(\Phi_{u}(x_0,r) + C\int_0^r\frac{\omega(t)}{t}\,dt\Big)\geq 
\frac{2}{r^{n+4}}\int_{\partial B_r} (\langle\nabla u_{\LL(x_0)}, x\rangle-2u_{\LL(x_0)})^2 d\mathcal{H}^{n-1},
\end{equation}
for $\mathcal{L}^1$ a.e. $r\in(0,\frac12\mathrm{dist}(K,\partial\Omega))$,
where $\omega(r):=\omega_f(r)+r^{1-\frac{n}{p}}$. 
%is Dini-continuous.

In particular, $\Phi_{u}(x_0,\cdot)$ has finite right limit $\Phi_{u}(x_0,0^+)$ in zero, and 
for all $r\in(0,\frac12\mathrm{dist}(K,\partial\Omega))$,
\begin{equation}\label{e:Phi(r)-Phi(0)}
 \Phi_{u}(x_0,r)-\Phi_{u}(x_0,0^+)\geq -C\int_0^r\frac{\omega(t)}{t}\,dt.
\end{equation} 
\end{theorem}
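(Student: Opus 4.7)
The plan is to follow the classical scheme for Weiss' monotonicity, working in the normalized coordinates introduced just before the statement where $v:=u_{\LL(x_0)}$ satisfies $\Delta v = 1 + f_{x_0}$ with $f_{x_0}$ collecting a $W^{1,p}$-perturbation plus the Dini modulus of $f$ (cf.~\eqref{e:PDE u f1}). In these variables $\Phi_u(x_0,r)$ is nothing but Weiss' boundary-adjusted Dirichlet-plus-linear energy of $v$, which is the model situation where the computation is cleanest.

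Concretely, I would first introduce the rescaling $v_r(y):=r^{-2}v(ry)$, note the homogeneity identity $r\,\partial_r v_r = \nabla v_r\cdot y - 2v_r$, and rewrite
\[
\Phi_u(x_0,r)=\int_{B_1}\bigl(|\nabla v_r|^2+2v_r\bigr)dy-2\int_{\partial B_1}v_r^2\,d\mathcal{H}^{n-1}.
\]
Differentiating in $r$, integrating $2\nabla v_r\cdot\nabla\partial_r v_r$ by parts, substituting the PDE $\Delta v_r = 1+f_{x_0}(r\cdot)$ (the ``$1$'' cancels exactly the $2\partial_r v_r$ coming from the linear term), and finally using $\partial_\nu v_r=r\partial_r v_r+2v_r$ on $\partial B_1$ to cancel the cross term with the derivative of the surface integral, yields
\[
\frac{d\Phi_u}{dr}=2r\int_{\partial B_1}(\partial_r v_r)^2 d\mathcal{H}^{n-1}-2\int_{B_1} f_{x_0}(ry)\,\partial_r v_r\,dy.
\]
Returning to the $x=ry$ variables converts the first summand exactly into the RHS of \eqref{e:Weiss}, while the second becomes $-\tfrac{2}{r^{n+3}}\int_{B_r} f_{x_0}(x)\bigl(\nabla v\cdot x-2v\bigr)dx$.

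The heart of the argument, and the step I expect to be the main technical obstacle, is controlling this error integral by $C\omega(r)/r$. Two observations combine: first, the integrand vanishes a.e.~on $\Lambda_v=\{v=0\}$ since $v$ and $\nabla v$ both vanish there, so one integrates only over $\{v>0\}$; second, by Proposition~\ref{p:esitmate D2u} one has $|\nabla v\cdot x-2v|\leq C r^2$ pointwise on $B_r$. It then remains to bound $\int_{B_r\cap\{v>0\}}|f_{x_0}|$, decomposing $f_{x_0}$ into three pieces: the factor $\bigl(f_{\LL(x_0)}/f(x_0)-1\bigr)\chi_{\{v>0\}}$ is $\leq C\,\omega_f(Cr)$ by (H3); the $(c_{ij}-\delta_{ij})\partial^2_{ij}v$ term is estimated by Morrey's embedding on $\mathbb{C}_{x_0}$ (giving modulus $Cr^{1-n/p}$ since $\mathbb{C}_{x_0}(0)=\mathrm{Id}$) combined with the $L^p$ bound $\|\nabla^2 v\|_{L^p(B_r)}\leq Cr^{n/p}$ from \eqref{e:estimate D2u} and Hölder; the divergence term $\diw\mathbb{C}^i_{x_0}\,\partial_i v$ uses $|\nabla v|\leq Cr$ from \eqref{e:u e grad u limitate} together with $\diw\mathbb{C}^i_{x_0}\in L^p$ and Hölder. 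Collecting, $\int_{B_r\cap\{v>0\}}|f_{x_0}|\leq C r^n\bigl(\omega_f(Cr)+r^{1-n/p}\bigr)$, which after multiplication by $Cr^2/r^{n+3}$ gives exactly the error bound $C\omega(r)/r$ with $\omega(r)=\omega_f(r)+r^{1-n/p}$, proving \eqref{e:Weiss}.

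Finally, the existence of $\Phi_u(x_0,0^+)$ and the bound \eqref{e:Phi(r)-Phi(0)} are easy consequences: \eqref{e:Weiss} shows that $r\mapsto\Phi_u(x_0,r)+C\int_0^r\omega(t)/t\,dt$ is nondecreasing, the growth estimates \eqref{e:u e grad u limitate} yield $|\Phi_u(x_0,r)|\leq C$ for small $r$, and the Dini assumption (H3) together with $p>n$ ensure that $\int_0^r\omega(t)/t\,dt$ is finite and infinitesimal, giving a finite right limit at $0^+$ and \eqref{e:Phi(r)-Phi(0)} by passing to the limit in the monotonicity.
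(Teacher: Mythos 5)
Your proposal is correct and follows essentially the same strategy as the paper's proof: derive the identity
\[
\Phi_u'(x_0,r)=\frac{2}{r^{n+4}}\int_{\partial B_r}\big(\langle\nabla u_{\LL(x_0)},x\rangle-2u_{\LL(x_0)}\big)^2\,d\mathcal{H}^{n-1}-\frac{2}{r^{n+3}}\int_{B_r}f_{x_0}\big(\langle\nabla u_{\LL(x_0)},x\rangle-2u_{\LL(x_0)}\big)\,dx,
\]
restrict the error integral to $\{u_{\LL(x_0)}>0\}$, bound $|\langle\nabla u_{\LL(x_0)},x\rangle-2u_{\LL(x_0)}|\leq Cr^2$ via Proposition~\ref{p:esitmate D2u}, and then split $f_{x_0}$ into the three pieces coming from~\eqref{e:PDE u f1}, estimated respectively by $\omega_f$ and by Morrey/H\"older with the $W^{2,p}$ bound~\eqref{e:estimate D2u}. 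The only cosmetic difference is that you rescale to the fixed ball $B_1$ and differentiate $v_r$, whereas the paper differentiates the $B_r$-integrals directly via Coarea and divergence-theorem identities; these yield the same identity and the substantive content of the proof is identical.
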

\begin{proof}[Proof of Theorem~\ref{t:Wp>n}]
We analyse separately the volume and the boundary terms
appearing in the definition of the Weiss energy in 
\eqref{e:Weiss energy}. For the sake of notational simplicity 
we write $u_{x_0}$ in place of $u_{\LL(x_0)}$.
In what follows $C=C(n,p,\Lambda,c_0,K,\|f\|_{L^\infty},
 \|\mathbb{A}\|_{W^{1,p}})>0$ denotes a constant that 
 may vary from line to line.

We start off with the bulk term. The Coarea Formula implies for $\mathcal{L}^1$-a.e. 
$r\in(0,\mathrm{dist}(K,\partial\Omega))$  
\begin{align}\label{e:Phi' add1}
\frac{d}{dr}&\Big(\frac{1}{r^{n+2}}\int_{B_r} \left(|\nabla u_{x_0}|^2 + 2\,u_{x_0}\right)\,dx\Big)=\notag\\ 
&-\frac{n+2}{r^{n+3}}\int_{B_r}\left(|\nabla u_{x_0}|^2 + 2\,u_{x_0}\right)\,dx + \frac{1}{r^{n+2}}\int_{\partial B_r}\left(|\nabla u_{x_0}|^2 + 2\,u_{x_0}\right)\,dx.
\end{align}
We use the Divergence Theorem together with the following
identities
\[
|\nabla u_{x_0}|^2=
\frac{1}{2}\diw(\nabla(u^2_{x_0})) - u_{x_0}\,\Delta u_{x_0}\,,
\]
\[
  \diw\left(|\nabla u_{x_0}|^2\,\frac{x}{r}\right)
  =\frac{n-2}{r}|\nabla u_{x_0}|^2 
  - 2\Delta u_{x_0} \langle\nabla u_{x_0},\frac{x}{r}\rangle 
  + 2\,\diw\left(\langle\nabla u_{x_0},\frac{x}{r}\rangle
  \nabla u_{x_0}  \right),
 \]
\[
  \diw\left(u_{x_0}\,\frac{x}{r}\right)= u_{x_0}\,\frac{n}{r} + \langle\nabla u_{x_0},\frac{x}{r}\rangle,
\]
to deal with the first, third and fourth addend in 
\eqref{e:Phi' add1}, respectively.
Hence, we can rewrite the right hand side of equality \eqref{e:Phi' add1} as follows
\begin{align}\label{e:Phi' add11}
\frac{d}{dr}&\Big(\frac{1}{r^{n+2}}\int_{B_r} \left(|\nabla u_{x_0}|^2 + 2\,u_{x_0}\right)\,dx\Big)
= \frac{2}{r^{n+2}}\int_{B_r}(\Delta u_{x_0}-1)\left(2\,\frac{u_{x_0}}{r}-\langle\nabla u_{x_0},\frac{x}{r}\rangle\right)\,dx \notag\\
&+\frac{2}{r^{n+2}}\int_{\partial B_r} \langle\nabla u_{x_0},\frac{x}{r}\rangle^2 d\mathcal{H}^{n-1} 
-\frac{4}{r^{n+2}}\int_{\partial B_r} \frac{u_{x_0}}{r}\,\langle\nabla u_{x_0},\frac{x}{r}\rangle d\mathcal{H}^{n-1}\,.
\end{align}

We consider next the boundary term in the expression of $\Phi_u$. By scaling and a direct calculation we get
\begin{align}\label{e:Phi' add2}
 \frac{d}{dr}\Big(\frac{2}{r^{n+3}}\int_{\partial B_r} u^2_{x_0}\,d\mathcal{H}^{n-1}\Big)
 &\stackrel{x=ry}{=}2\int_{\partial B_1} \frac{d}{dr} \left(\frac{u_{x_0}(ry)}{r^2}\right)^2\, d\mathcal{H}^{n-1}\notag\\
 &=4\int_{\partial B_1}\frac{u_{x_0}(ry)}{r^4}\left(\langle\nabla u_{x_0}(ry),y\rangle -2\frac{u_{x_0}(ry)}{r}\right)\, d\mathcal{H}^{n-1}\notag\\
 &\stackrel{x=ry}{=} \frac{4}{r^{n+2}} \int_{\partial B_r} \frac{u_{x_0}}{r}\,\langle\nabla u_{x_0},\frac{x}{r}\rangle\, d\mathcal{H}^{n-1}
 - \frac{8}{r^{n+2}} \int_{\partial B_r}\frac{u^2_{x_0}}{r^2}\, d\mathcal{H}^{n-1}\,.
 \end{align}
Then by combining together the equations \eqref{e:Phi' add11} and \eqref{e:Phi' add2} and recalling equation \eqref{e:PDE u f1}
we obtain
\begin{equation*}%\label{e:stima finale}
\begin{split}
 \Phi'_u(x_0,r) &= \frac{2}{r^{n+2}}\int_{B_r}f_{x_0}\,\left(2\,\frac{u_{x_0}}{r}-\langle\nabla u_{x_0},\frac{x}{r}\rangle\right)\,dx 
+\frac{2}{r^{n+2}}\int_{\partial B_r} \left(\langle\nabla u_{x_0},\frac{x}{r}\rangle - 2\,\frac{u_{x_0}}{r}\right)^2 d\mathcal{H}^{n-1}\\
&= \frac{2}{r^{n+2}}\int_{B_r\setminus \Lambda_{u_{x_0}}}f_{x_0}\,\left(2\,\frac{u_{x_0}}{r}-\langle\nabla u_{x_0},\frac{x}{r}\rangle\right)\,dx 
+\frac{2}{r^{n+2}}\int_{\partial B_r} \left(\langle\nabla u_{x_0},\frac{x}{r}\rangle - 2\,\frac{u_{x_0}}{r}\right)^2 d\mathcal{H}^{n-1},
\end{split}
\end{equation*}
 where in the last equality we used the unilateral obstacle condition to deduce that $\Lambda_{u_{x_0}}\subseteq\{\nabla u_{x_0} =\underline{0}\}$.
 Therefore, by the growth of $u$ and $\nabla u$ 
 from $x_0$ in \eqref{e:u e grad u limitate} we obtain
\begin{equation}\label{e:phiprimo}
 \Phi'_u(x_0,r) \geq -\frac{C}{r^{n+1}}\int_{B_r\setminus \Lambda_{u_{x_0}}}|f_{x_0}|\,dx + \frac{2}{r^{n+2}}\int_{\partial B_r} \left(\langle\nabla u_{x_0},\frac{x}{r}\rangle 
 - 2\,\frac{u_{x_0}}{r}\right)^2 d\mathcal{H}^{n-1}\,.
\end{equation}
Next note that by (H1), (H3), and by the very definition of $f_{x_0}$ in \eqref{e:PDE u f1} it follows that
\begin{equation}\label{e:uno}
\frac{1}{r^{n+1}}
\int_{B_r\setminus \Lambda_{u_{x_0}}}|f_{x_0}|\,dx 
\leq\frac{\omega_f(r)}{c_0\,r}+
 \frac C{r^{n(1+\frac 1p)}}\int_{B_r}|\nabla^2 u_{x_0}|\,dx + 
 \frac C{r^n}\int_{B_r}|\diw\mathbb{C}_{x_0}|\,dx\,.
\end{equation}
%for some $C=C(n,p,K,\|\mathbb{A}\|_{C^{0,1-\sfrac np}})>0$.
 By \eqref{e:estimate D2u} we estimate the second addend on the right hand side of the last inequality as follows 
\begin{equation}\label{e:due}
 \frac1{r^{n(1+\frac1p)}}
 \int_{B_r}|\nabla^2 u_{x_0}|\,dx 
 \leq  \frac C{r^{n(1+\frac1p)}}
 \|\nabla^2 u_{x_0}\|_{L^p(B_r,\R^{n\times n})}
 (\omega_n r^n)^{1-\frac{1}{p}}\leq C\, r^{-\frac{n}{p}}\,, 
\end{equation}
%for some constant $C=C(n,p,K)>0$, and 
by H\"older inequality we get for the third addend
\begin{equation}\label{e:tre}
 \frac{1}{r^n} \int_{B_r}|\diw\mathbb{C}_{x_0}|\,dx
 \leq \frac{1}{r^n}\|\diw\mathbb{C}_{x_0}\|_{L^p(B_r,\R^n)}\,(\omega_n r^n)^{1-\frac{1}{p}}
 \leq C\, r^{-\frac{n}{p}}\,.
\end{equation}
%for some constant  $C=C(n,p,\Lambda,K,\|\diw\mathbb{A}\|_{L^p})>0$.
Therefore, we conclude from \eqref{e:phiprimo}-\eqref{e:tre} 
\begin{equation*}
\begin{split}
 \Phi'_u(x_0,r) &\geq 
 %-c \left(\frac{\omega_f(r)}{r}+\frac{\omega_{\mathbb{A}}(r)}{r} + r^{-\frac{n}{p}}+ r\sup_{B_r}c\right)+ \frac{2}{r^{n+4}}\int_{\partial B_r} \left(\langle\nabla u,\frac{x}{r}\rangle - 2\,\frac{u}{r}\right)^2 d\mathcal{H}^{n-1}\\
 -C\,\frac{\omega(r)}{r} + \frac{2}{r^{n+2}}\int_{\partial B_r} \left(\langle\nabla u_{x_0},\frac{x}{r}\rangle - 2\,\frac{u_{x_0}}{r}\right)^2 d\mathcal{H}^{n-1}\,,
\end{split}
\end{equation*}
where $\omega(r):=\omega_f(r)+ r^{1-\frac{n}{p}}$. 
%and $C=C(n,p,c_0,\Lambda,K,\|\mathbb{A}\|_{W^{1,p}})>0$.
\end{proof}
\begin{remark}
Recalling that $f$ %and $\mathbb{A}$ are 
is Dini-continuous by (H3), the modulus of continuity 
$\omega$ provided by Theorem~\ref{t:Wp>n} is in 
turn Dini-continuous.
\end{remark}

\begin{remark}
More generally, the argument in Theorem~\ref{t:Wp>n} works for solutions to second order elliptic PDEs 
in nondivergence form of the type
\[%\begin{equation}
 a_{ij}(x)\,u_{ij} + b_i(x)\, u_i + c(x)\, u 
 = f(x)\chi_{\{u>0\}}\,,
\]%\end{equation}and rewrite its as follows:
% provided we assume $\mathbb{A}$ to be Dini-continuous with $\diw\mathbb{A}\in L^p(\Omega,\R^{n\times n})$,
% instead of (H1).
% Indeed, \cite[Theorem~3.9]{BlankHao15} ensures the two estimates in formula \eqref{e:u e grad u limitate}, instead for what 
% \eqref{e:estimate D2u} is concerned
the only difference with the statement of Theorem~\ref{t:Wp>n} being that in this framework
$\omega(r):=\omega_f(r)%+\omega_{\mathbb{A}}(r)
+ r^{1-\frac{n}{p}}+ r^2\sup_{B_r}c$.
\end{remark}

\subsection{Monneau's quasi-monotonicity formula}
Let $v$ be a positive $2$-homogeneus polynomial solution of 
\begin{equation}\label{d:v}
 \Delta v = 1\quad \textrm{on $\R^n$}.
\end{equation}
Then by $2$-homogeneity, elementary calculations lead to 
 \begin{equation}\label{e:Phiv}
\Phi_v(\underline{0},r)=\Phi_v(\underline{0},1)= \int_{B_1} v\, dy,
 \end{equation}
for all $r>0$.
We prove next a quasi-monotonicity formula for solutions of the obstacle problem in case $x_0\in\Gamma_u$ 
is a singular point of the free boundary, namely it is such that 
\begin{equation}\label{e:en BU sing Monneau}
 \Phi_u(x_0,0^+)=\Phi_v(\underline{0},1)
\end{equation}
for some $v$ $2$-homogeneous solution of \eqref{d:v}.
\begin{theorem}[Monneau's quasi-monotonicity formula]\label{t:Mp>n}
Under hypotheses (H1), (H2), (H4) with $a=1$, if $K\subset\Omega$ is a compact set and 
\eqref{e:Phiv} holds for $x_0\in K\cap \Gamma_u$ then there exists a constant $C=C(n,p,\Lambda,c_0,K,\|f\|_{L^\infty},\|\mathbb{A}\|_{W^{1,p}})>0$ such that the function 
 \begin{equation}\label{e:Monneau}
  \big(0,\frac12 \mathrm{dist}(K,\partial\Omega)\big)\ni r\longmapsto 
  \frac{1}{r^{n+3}}\int_{\partial B_r} ( u_{\LL(x_0)}-v)^2\,dx + %\frac{C}{r}\int_0^r\frac{\omega(t)}{t}\,dt
C\int_0^r\frac{dt}{t}\int_0^t\frac{\omega(s)}{s}\,ds
 \end{equation}
is nondecreasing, where $v$ is any $2$-homogeneus polynomial solution of \eqref{d:v}, and $\omega$ is the modulus of
continuity provided by Theorem~\ref{t:Wp>n}. 
%In particular,
% \begin{equation}
%  \frac{d}{dr}\Big(\frac{1}{r^{n+3}}\int_{\partial B_r} (u-v)^2 + C\int_0^r\frac{dt}{t}\int_0^t\frac{\omega(s)}{s}\,ds\Big)\geq 0.
% \end{equation}
\end{theorem}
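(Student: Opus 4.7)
The plan is to decompose $u_{\LL(x_0)} = v + \psi$ with $\psi := u_{\LL(x_0)} - v$, and recognize the derivative of $M(r):=\frac{1}{r^{n+3}}\int_{\partial B_r}\psi^2\,d\mathcal{H}^{n-1}$ as a Weiss-type energy for $u_{\LL(x_0)}$ plus manageable errors. Two preliminary observations drive the argument. First, since $v\geq 0$, one has $\psi = -v \leq 0$ on $\Lambda_{u_{\LL(x_0)}}$. Second, from \eqref{e:PDE u f1} together with the fact that $\nabla u_{\LL(x_0)} = 0$ and $\nabla^2 u_{\LL(x_0)} = 0$ a.e.\ on the coincidence set (a consequence of $u_{\LL(x_0)}\in W^{2,p}_{loc}$ with $p>n$), $\Delta\psi = f_{x_0}$ a.e.\ in $\Omega_{\LL(x_0)}$, with the crucial additional information that $f_{x_0}\equiv -1$ a.e.\ on $\Lambda_{u_{\LL(x_0)}}$.

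By rescaling, $M(r)=\int_{\partial B_1}(u_{\LL(x_0),r}-v)^2\,d\mathcal{H}^{n-1}$ (exploiting the $2$-homogeneity of $v$), and differentiating in $r$ produces the boundary integrand $\frac{2}{r}\phi_r(y\cdot\nabla\phi_r - 2\phi_r)$, which vanishes identically on $v$. Integration by parts on $B_1$ using $\Delta\phi_r(y) = f_{x_0}(ry)$, followed by unscaling, yields
\[
 M'(r) = \frac{2}{r^{n+3}}\int_{B_r}|\nabla\psi|^2\,dx - \frac{4}{r^{n+4}}\int_{\partial B_r}\psi^2\,d\mathcal{H}^{n-1} + \frac{2}{r^{n+3}}\int_{B_r}\psi\, f_{x_0}\,dx.
\]
In parallel, expanding $\Phi_u(x_0,r)$ with $u_{\LL(x_0)}=v+\psi$, the $v$--$\psi$ cross terms telescope thanks to $x\cdot\nabla v = 2v$ (from $2$-homogeneity) and $\Delta v = 1$, leading to
\[
 \Phi_u(x_0,r) - \Phi_v(\underline 0, 1) = \frac{1}{r^{n+2}}\int_{B_r}|\nabla\psi|^2\,dx - \frac{2}{r^{n+3}}\int_{\partial B_r}\psi^2\,d\mathcal{H}^{n-1}.
\]
Splitting $\int_{B_r}\psi f_{x_0}\,dx$ along $\{u_{\LL(x_0)}>0\}$ and $\Lambda_{u_{\LL(x_0)}}$, and inserting $f_{x_0} = -1$, $\psi = -v$ on the latter set, recasts the identity as
\[
 M'(r) = \frac{2}{r}\bigl[\Phi_u(x_0,r) - \Phi_v(\underline 0, 1)\bigr] + \frac{2}{r^{n+3}}\int_{B_r\cap\Lambda_{u_{\LL(x_0)}}} v\,dx + \frac{2}{r^{n+3}}\int_{B_r\cap\{u_{\LL(x_0)}>0\}}\psi \, f_{x_0}\,dx,
\]
in which the middle (non-negative) term is discarded.

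For the remaining error, Proposition~\ref{p:esitmate D2u} together with $|v(y)|\leq C|y|^2$ yields $\|\psi\|_{L^\infty(B_r)}\leq C r^2$, while the Weiss-type computations \eqref{e:uno}--\eqref{e:tre} give $\int_{B_r\cap\{u_{\LL(x_0)}>0\}}|f_{x_0}|\,dx\leq C r^n\omega(r)$, so this last term is bounded below by $-C\omega(r)/r$. Applying Theorem~\ref{t:Wp>n} at the singular point $x_0$, where $\Phi_u(x_0, 0^+) = \Phi_v(\underline 0, 1)$ by hypothesis, furnishes $\Phi_u(x_0, r) - \Phi_v(\underline 0, 1) \geq -C\int_0^r \omega(t)/t\,dt$, so that altogether
\[
 M'(r) \geq -\frac{C}{r}\int_0^r \frac{\omega(t)}{t}\,dt - C\,\frac{\omega(r)}{r}.
\]
The first contribution is exactly the derivative of the iterated correction $C\int_0^r \frac{dt}{t}\int_0^t\omega(s)/s\,ds$ appearing in \eqref{e:Monneau}. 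The pointwise error $\omega(r)/r$ is absorbed into the same derivative via the dyadic bound $\omega(r)\leq C\int_0^r \omega(s)/s\,ds$, which follows from the subadditivity (hence doubling) of $\omega_f$ combined with the trivial doubling of $r\mapsto r^{1-n/p}$. Finiteness of the iterated integral rests on (H4) with $a=1$ via Fubini, $\int_0^1\frac{dt}{t}\int_0^t\omega(s)/s\,ds = \int_0^1 \omega(s)/s\,|\log s|\,ds<\infty$, which is precisely why the double-Dini regularity—stronger than (H3)—is required here. The main technical obstacle is this final absorption of the two distinct error types into a single iterated Dini correction; once implemented, the claimed monotonicity follows directly from the lower bound on $M'(r)$.
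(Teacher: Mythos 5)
Your proposal is correct and takes essentially the same route as the paper's proof: set $w=u_{\LL(x_0)}-v$, differentiate the rescaled boundary integral, integrate by parts to produce $w\Delta w$ and the Weiss-energy difference $\Phi_u(x_0,r)-\Phi_u(x_0,0^+)$, discard the nonnegative contribution $\int_{\Lambda_{u_{\LL(x_0)}}}v$, and bound the remaining bulk error by $C\omega(r)/r$ using \eqref{e:u e grad u limitate} and \eqref{e:uno}--\eqref{e:tre}. Where the paper is terse at the end (writing an equality with a constant allowed to vary), your explicit doubling argument $\omega(r)\leq C\int_0^r\omega(s)/s\,ds$, used to fold the pointwise error into the iterated Dini correction, supplies exactly the detail left implicit.
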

% \begin{remark}
%  In particular, if $K\subset\Omega$ is compact, formula \eqref{e:Monneau} 
%  holds for every $x_0\in K$ and for all  $r\in(0,\frac12\mathrm{dist}(K,\partial\Omega))$ with a 
%  uniform constant $C$.
% \end{remark}
\begin{proof}[Proof of Theorem~\ref{t:Mp>n}]
As in the proof of Theorem~\ref{t:Wp>n} for the sake of notational simplicity we write $u_{x_0}$ rather than $u_{\LL(x_0)}$. 
 
 Set $w:=u_{x_0}-v$, then arguing as in \eqref{e:Phi' add2} and by applying the Divergence Theorem we get
\begin{equation}\label{e:monneau1}
\begin{split}
 \frac{d}{dr}&\left(\frac{1}{r^{n+3}}\int_{\partial B_r} 
 w^2\, d\mathcal{H}^{n-1}\right)
 = \frac{2}{r^{n+3}}\int_{\partial B_r}w\left(\langle\nabla w,\frac{x}{r}\rangle -2\frac{w}{r}\right)\, d\mathcal{H}^{n-1}\\
  &= \frac{2}{r^{n+3}}\int_{B_r} \diw(w\nabla w)\,dx - \frac{4}{r^{n+4}}\int_{\partial B_r} w^2\, d\mathcal{H}^{n-1}\\
  &= \frac{2}{r^{n+3}}\int_{B_r} w\Delta w\, dx + \frac{2}{r^{n+3}}\int_{B_r}|\nabla w|^2\,dx - \frac{4}{r^{n+4}}\int_{\partial B_r} w^2\, d\mathcal{H}^{n-1}\\
\end{split}
\end{equation}
For what concerns the first term on the right hand side of \eqref{e:monneau1} recall that $u\in W^{2,p}_{loc}(\Omega)$, 
thus by locality of the weak derivatives 
$\Ln\big(\{\nabla u_{x_0}=\underline{0}\}\setminus
\{\nabla^2 u_{x_0}=\underline{0}\}\big)=0$. Being 
$\Lambda_{u_{x_0}}\subseteq\{\nabla u_{x_0}=\underline{0}\}$, we conclude that $\Delta u_{x_0}=0$ $\Ln$-a.e. in 
$\Lambda_{u_{x_0}}$, and therefore in view of \eqref{e:PDE u f1} we infer
\begin{equation*}
 w \Delta w=(u_{x_0}-v)(\Delta u_{x_0} -1)=
 \begin{cases}
  (u_{x_0}-v)\,f_{x_0} & \textrm{$\Ln$-a.e. $\Omega\setminus\Lambda_{u_{x_0}}$}\cr
  v & \textrm{$\Ln$-a.e. $\Lambda_{u_{x_0}}$.}
\end{cases} 
\end{equation*}
Instead, estimating the second and third terms on the right hand 
side of \eqref{e:monneau1} thanks to \eqref{d:v} yields
\begin{equation*}
\begin{split}
  \frac{1}{r^{n+3}} & \int_{B_r}|\nabla w|^2\,dx - \frac{2}{r^{n+4}}\int_{\partial B_r} w^2\, d\mathcal{H}^{n-1} 
  = \frac{1}{r^{n+3}}\int_{B_r}\left(|\nabla u_{x_0}|^2
  +|\nabla v|^2\right)\,dx\\
  & -\frac{2}{r^{n+3}}\int_{B_r}\diw(u_{x_0}\nabla v)\,dx + \frac{2}{r^{n+3}}\int_{B_r}u_{x_0}\,dx- \frac{2}{r^{n+4}}\int_{\partial B_r}w^2\, d\mathcal{H}^{n-1}\\
  &\stackrel{\eqref{e:Phiv}}{=} \frac{1}{r}\big(\Phi_{u_{x_0}}(x_0,r) -\Phi_v(x_0,r)\big) 
      -\frac{2}{r^{n+4}}\int_{\partial B_r}u_{x_0}\,\left(\langle\nabla v,\frac{x}{r}\rangle-2v\right)\,dx \\
&\stackrel{\eqref{e:en BU sing Monneau}}{=}\frac{1}{r}
\left(\Phi_{u_{x_0}}(x_0,r) - \Phi_{u_{x_0}}(x_0,0^+)\right)\,.
\end{split}
\end{equation*}
Then, \eqref{e:monneau1} rewrites as 
\begin{equation*}
 \begin{split}
  \frac{d}{dr}\left(\frac{1}{r^{n+3}}\int_{\partial B_r} w^2\, d\mathcal{H}^{n-1}\right)
  = &\frac{2}{r}\left(\Phi_u(x_0,r) - \Phi_u(x_0,0^+)\right) \\
  &+ \frac{2}{r^{n+3}}\int_{B_r\setminus\Lambda_{u_{x_0}}} (u_{x_0}-v)f_{x_0}\,dx + \frac{2}{r^{n+3}}\int_{B_r\cap\Lambda_{u_{x_0}}}v\,dx.
 \end{split}
\end{equation*}
Inequality \eqref{e:Phi(r)-Phi(0)} in Theorem~\ref{t:Wp>n}, the growth of the solution $u$ from free boundary points in \eqref{e:u e grad u limitate}, 
the $2$-homogeneity and positivity of $v$ yield the conclusion
(cf. \eqref{e:uno}-\eqref{e:tre}): 
\begin{equation*}
 \frac{d}{dr}\left(\frac{1}{r^{n+3}}\int_{\partial B_r} w^2\, d\mathcal{H}^{n-1}\right)
 \geq -\frac{C}{r} \int_0^r \frac{\omega(t)}{t}\,dt - \frac{C}{r^{n+1}}\int_{B_r\setminus\Lambda_{u_{x_0}}}|f_{x_0}|\,dx 
 = -\frac{C}{r} \int_0^r \frac{\omega(t)}{t}\,dt
\end{equation*}
for some $C=C(n,p,\Lambda,c_0,K,\|f\|_{L^\infty},\|\mathbb{A}\|_{W^{1,p}})>0$.
\end{proof}

\section{Free boundary analysis}\label{s:applications}
Weiss' and Monneau's quasi-monotonicity formulas proved in the Section~\ref{s:q-mon formula} are important tools
to deduce regularity of free boundaries for classical obstacle problems for variational energies both in the quadratic and in the nonlinear setting (see \cite{FoGeSp15, Ger17, FoGerSp17, Monneau03, PSU, Weiss}).
In this section we improve \cite[Theorems~4.12 and 4.14]{FoGeSp15} in the quadratic case weakening the regularity of the coefficients of the relevant energies. This is possible thanks to the above 
mentioned new quasi-monotonicity formulas.

In the ensuing proof we will highlight only the substantial changes since the arguments are essentially those given in 
\cite{FoGeSp15, Ger17}. 
In particular, we remark again that in the quadratic case the main differences concern the quasi-monotonicity formulas 
established for the quantity $\Phi_u$ rather than for the natural candidate related to $\mathscr{E}$. 

We follow the approach by Weiss \cite{Weiss} and Monneau \cite{Monneau03} for the free boundary analysis in Theorem~\ref{t:linear}.
\begin{proof}[Proof of Theorem~\ref{t:linear}]
First recall that we may establish the conclusions for the function $u=w-\psi$ introduced in Section~\ref{s:prel}.
Given this, the only minor change to be done to the arguments in \cite[Section~4]{FoGeSp15} is related to the 
freezing of the energy where the regularity of the coefficients plays a substantial role. 
More precisely, in the current framework for all $v\in W^{1,2}(B_1)$ we have
 \[%\begin{equation}\label{e:E-Phi}
 \left|\int_{B_1}\big(\mathbb{A}(rx)\nabla v, \nabla v\rangle + 2f(rx)v\big)\,dx - \int_{B_1}\big(|\nabla v|^2 + 2v\big)dx\right|
  \leq (%\omega_\mathbb{A}(r)
  r^{1-\frac np}+\omega_f(r))
  \int_{B_1}\big(|\nabla v|^2 + 2v\big)dx.
 \]%\end{equation}
We then describe shortly the route to the conclusion. To begin with recall that the 
quasi-monotonicity formulas established in \cite[Section~3]{FoGeSp15} 
are to be substituted by those in Section~\ref{s:q-mon formula}.
Then the $2$-homogeneity of blow up limits in \cite[Proposition~4.2]{FoGeSp15} now follows from Theorem~\ref{t:Wp>n}.
The quadratic growth of solutions from free boundary points contained 
in \cite[Lemma~4.3]{FoGeSp15}, that implies non degeneracy of 
blow up limits, is contained in Lemma~\ref{l:quadratic}.
The classification of blow up limits is performed exactly as in \cite[Proposition~4.5]{FoGeSp15}.
The conclusions of \cite[Lemma~4.8]{FoGeSp15}, a result instrumental for the uniqueness of blow up limits at regular points, 
can be obtained with essentially no difference. The proofs of \cite[Propositions~4.10,~4.11, 
Theorems~4.12,~4.14]{FoGeSp15} remain unchanged. The theses then follow at once.
\end{proof}

%%%%%%%%%%%%%%%%%%%%%%%%%%%%%%%%%%%%%%%%%%%%%%%%%%%%%%%%%%%%%%%%%
%
%	BIBLIOGRAPHY
%
%%%%%%%%%%%%%%%%%%%%%%%%%%%%%%%%%%%%%%%%%%%%%%%%%%%%%%%%%%%%%%%%%

\bibliographystyle{plain}

\end{document}